\documentclass[11pt]{article}

\usepackage[T1]{fontenc}
\usepackage[utf8]{inputenc}

\usepackage{lmodern, microtype}
\usepackage{amssymb, latexsym, textcomp}
\usepackage{amsmath, amsthm}
\usepackage{array, setspace, verbatim}

\usepackage[english]{babel}
\usepackage[english]{varioref}

\usepackage{ifthen}
\usepackage{fancyhdr, fancybox, float}
\usepackage{color, multicol}
\usepackage{graphicx, pst-all, pb-diagram}
\usepackage[pdftex]{hyperref}

\usepackage[a4paper, textwidth=14cm]{geometry}

\unitlength 1cm

\newcommand*{\nidt}{\noindent}
\newcommand*{\dst}{\displaystyle}

\newcommand*{\wt}[1]{\widetilde{#1}}

\newcommand*{\espf}{\vspace*{1ex}}

\renewcommand*{\phi}{\varphi}

\newcommand*{\Aa}{\mathcal{A}}
\newcommand*{\C}{\mathbb{C}}
\newcommand*{\Cc}{\mathcal{C}}
\newcommand*{\D}{\mathbb{D}}

\newcommand*{\E}{\mathbb{E}}
\newcommand*{\Ee}{\mathcal{E}}

\renewcommand*{\H}{\mathbb{H}}
\newcommand*{\Ii}{\mathcal{I}}
\newcommand*{\Ll}{\mathcal{L}}
\newcommand*{\R}{\mathbb{R}}
\renewcommand*{\S}{\mathbb{S}}
\newcommand*{\Ss}{\mathcal{S}}

\newcommand*{\Nil}{\text{Nil}}
\newcommand*{\Sol}{\text{Sol}}
\newcommand*{\vol}{\text{vol}}

\newcommand*{\lp}{\left(}
\newcommand*{\rp}{\right)}
\newcommand*{\lc}{\left[}
\newcommand*{\rc}{\right]}
\newcommand*{\lac}{\left\{}
\newcommand*{\rac}{\right\}}
\newcommand*{\lan}{\langle}
\newcommand*{\ran}{\rangle}
\newcommand*{\lb}{\left|}
\newcommand*{\rb}{\right|}
\newcommand*{\rr}{\right.}

\renewcommand*{\a}{\forall}
\newcommand*{\into}{\rightarrow}

\newcommand*{\pint}{\lrcorner}

\newcommand*{\der}[2][]{\frac{\partial#1}{\partial#2}}

\let\div\relax \let\mod\div
\DeclareMathOperator{\div}{div}
\DeclareMathOperator{\mod}{\ mod}
\DeclareMathOperator{\rot}{curl}

\newtheorem{theorem}{Theorem}[section]
\newtheorem{lemma}[theorem]{Lemma}
\newtheorem{corollary}[theorem]{Corollary}
\newtheorem{proposition}[theorem]{Proposition}

\providecommand{\bysame}{\leavevmode\hbox to3em{\hrulefill}\thinspace}

\hypersetup{%
pdfauthor= {Sébastien Cartier},%
pdftitle= {Noether invariants for constant mean curvature surfaces in 3-dimensional homogeneous spaces},%
pdfcreator= {PDFLaTeX},%
pdfproducer= {PDFLaTeX}%
}

\title{Noether invariants for constant mean curvature surfaces in 3-dimensional homogeneous spaces}
\author{Sébastien Cartier}

\begin{document}

\maketitle

\begin{abstract}

We give explicit formulæ for Noether invariants associated to Killing vector fields for the variational problem of minimal and constant mean curvature surfaces in $3$-manifolds. In the case of homogeneous spaces, such invariants are the flux (associated to translations) and the torque (associated to rotations). Then we focus on homogeneous spaces with isometry groups of dimensions $3$ or $4$ and study the behavior of these invariants under the action of isometries. Finally, we give examples of actual computations and of interpretations of these invariants in different situations.

\end{abstract}

\nidt \textit{Mathematics Subject Classification:} \emph{Primary 53C42; Secondary 53A55}.

\section{Introduction}

The differential Noether theorem~\cite{No} describes an isomorphism between the Lie algebra of infinitesimal generators of the \emph{variational symmetries} associated to a variational problem and a space of \emph{conservation laws} for the related Euler-Lagrange equations. In particular, it can be applied to the variational problem of minimal or constant mean curvature (\emph{CMC} for short) surfaces in a homogeneous space using the isometries of the ambient space as symmetries --~for the isometries preserve the mean curvature. In the case of minimal surfaces in the euclidean $3$-space, Noether theorem leads to the notions of flux and torque, which are geometric invariants of the surfaces. And these geometric constrains can be used to find alignment conditions on the catenoidal ends of a surface~\cite{Ro}.

The present paper gives tools to use Noether invariants related to minimal and CMC surfaces in homogeneous manifolds. In Section~\ref{sec:genres}, we give explicit formulæ for Noether forms associated to Killing fields, see Theorem~\ref{thm:noethform} and Proposition~\ref{prop:noethsurf}. In Sections~\ref{sec:noethe} and~\ref{sec:noeths}, we focus on minimal and CMC surfaces in homogeneous spaces $\E^3(\kappa, \tau)$ and $\Sol_3$ respectively, corresponding to isometries of the ambient space. As in the euclidean case, these forms lead to invariants, namely the \emph{flux} and \emph{torque}, related to the geometry of the surface. And in Section~\ref{sec:exples}, we give examples of actual computations of Noether invariants in different situations.

The construction can be written in coordinates using jet bundles~\cite{Ol} or more abstractly using basic tools of contact geometry~\cite{BrGriGro,Ro2}. We choose the second approach, which is coordinate-free and allows us to provide general formulæ.

\section{General results} \label{sec:genres}

\subsection{Contact structure and lagrangians}

The present subsection deals with classical results on contact structures and lagrangians. Details on the notions introduced can be found in~\cite{BrGriGro}.

\medskip

Let $\big{(} M, \lan \cdot, \cdot \ran \big{)}$ be a $3$-dimensional riemannian manifold and consider the following fibration:
\[
FM \stackrel{\pi'}{\longrightarrow} \Cc \stackrel{\pi}{\longrightarrow} M,
\]
where the contact manifold $\Cc$ is the unit fiber bundle $UM$ of $M$ --~or equivalently the Grassmannian of oriented $2$-planes tangent to $M$~-- and $FM$ the orthonormal frame bundle. Since the study is local, we consider a local chart on $M$ with generic point $x$. An element of $\Cc$ is a couple $(x, e_0)$ with $e_0 \in \S^2$ and a point of $FM$ writes $(x, e)$ where $e= (e_0, e_1, e_2)$ is an orthonormal family with respect to $\lan \cdot, \cdot \ran_x$. Finally, the projections $\pi'$ and $\pi$ are respectively:
\[
\pi'(x, e)= (x, e_0) \quad \text{and} \quad \pi(x, e_0)= x.
\]

In the sequel, we work in $FM$ to facilitate computations, but actually the quantities we define are \emph{basic}, i.e. they are liftings of quantities defined on $\Cc$. To ease the understanding, we use the same notation for a quantity and its liftings. Also, we do not distinguish an infinitesimal generator of an action on $M$ from its \emph{extensions} to $\Cc$ or $FM$, i.e. the infinitesimal generators of the natural extension of the action.

\medskip

If $e= (e_0, e_1, e_2)$ is an orthonormal frame on $M$, denote $(\theta^0, \theta^1, \theta^2)$ dual basis composed of $1$-forms and consider elements $(\omega^i_j)_{0 \leq i, j \leq 2}$ of $\Omega^1(FM)$ such that:
\begin{gather*}
d\theta^0= -\omega^0_1 \wedge \theta^1+ \omega^2_0 \wedge \theta^2, \quad d\theta^1= \omega^0_1 \wedge \theta^0- \omega^1_2 \wedge \theta^2, \\
d\theta^2= -\omega^2_0 \wedge \theta^0+ \omega^1_2 \wedge \theta^1 \quad \text{and} \quad \omega^i_j= -\omega^j_i.
\end{gather*}
The \emph{structure forms} $\theta^0$, $\theta^1$, $\theta^2$, $\omega^0_1$, $\omega^1_2$ and $\omega^2_0$ are independent and generate $\Omega^1(FM)$.

\begin{proposition}

Let $\theta^0 \in \Omega^1(\Cc)$ be defined as follows:
\[
\a (x, e_0) \in \Cc, \ \a (u, \xi) \in T_{(x, e_0)} \Cc, \ \theta^0_{(x, e_0)}(u, \xi)= \lan e_0, u \ran_x.
\]
If $I$ is the line subfiber bundle of $T^* \Cc$ generated by $\theta^0$, then $(\Cc, I)$ is a contact structure and in the sequel we call $\theta^0$ the \emph{contact form}.

\end{proposition}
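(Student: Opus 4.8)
The plan is to verify the contact condition directly: since $\Cc = UM$ is a $5$-dimensional manifold, showing that $(\Cc, I)$ is a contact structure amounts to checking that $\theta^0 \wedge (d\theta^0)^2$ is a nowhere-vanishing $5$-form on $\Cc$ (and, along the way, that $\theta^0$ itself never vanishes, so that $I = \R\,\theta^0$ really is a line subbundle). Rather than computing on $\Cc$ directly, I would carry out the computation upstairs on the frame bundle $FM$, where the structure forms provide an explicit coframe.

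First I would observe that the intrinsic $\theta^0$ of the statement coincides, after pullback by $\pi'$, with the structure form $\theta^0$ on $FM$: both read off the $\lan e_0, \cdot \ran$-component of the horizontal part of a tangent vector. Consequently $\theta^0$ is a basic form, and since $d$ commutes with $\pi'^*$, the structure equation $d\theta^0 = -\omega^0_1 \wedge \theta^1 + \omega^2_0 \wedge \theta^2$ computes exactly the pullback of $d(\theta^0|_\Cc)$; in particular $d\theta^0$ is basic as well. This is the conceptual crux, because it legitimizes checking the contact condition on $FM$: as $\pi'$ is a submersion, $\pi'^*$ is injective on forms, so a basic form vanishes on $\Cc$ if and only if its pullback vanishes on $FM$.

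The verification is then a short computation with the structure equations. Squaring $d\theta^0$ and discarding the terms with a repeated factor gives $(d\theta^0)^2 = -2\,\omega^0_1 \wedge \theta^1 \wedge \omega^2_0 \wedge \theta^2$, whence, after reordering,
\[
\theta^0 \wedge (d\theta^0)^2 = 2\,\theta^0 \wedge \theta^1 \wedge \theta^2 \wedge \omega^0_1 \wedge \omega^2_0.
\]
The right-hand side is the wedge of five of the six independent structure forms (all but $\omega^1_2$, the connection form of the $SO(2)$-fiber of $\pi'$), hence nowhere zero on $FM$. By the basic/pullback argument this shows $\theta^0 \wedge (d\theta^0)^2 \neq 0$ everywhere on $\Cc$, which is precisely the contact condition; nonvanishing of $\theta^0$ follows \emph{a fortiori}. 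Thus $(\Cc, I)$ is a contact structure.

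I expect no serious obstacle in the calculation itself. The only point requiring care is the descent argument of the second paragraph --~keeping track of which structure forms are basic and justifying that the contact condition computed on $FM$ faithfully reflects the situation on $\Cc$. Getting the signs right when reordering the final wedge product is routine but worth double-checking.
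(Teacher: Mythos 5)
The paper offers no proof of this proposition: it is presented as a classical fact, with the details deferred to the reference of Bryant--Griffiths--Grossman, so there is no internal argument to compare against. Your proof is correct and is precisely the standard one implicit in the paper's own setup: the identification of the intrinsic $\theta^0$ on $\Cc$ with the tautological structure form on $FM$ is asserted in the paper itself, your sign bookkeeping giving $\theta^0 \wedge (d\theta^0)^2 = 2\,\theta^0 \wedge \theta^1 \wedge \theta^2 \wedge \omega^0_1 \wedge \omega^2_0$ checks out, and the descent argument (injectivity of $\pi'^*$ along the submersion $\pi'$, with only the $SO(2)$-connection form $\omega^1_2$ missing from the five-fold wedge) is exactly what legitimizes the paper's later practice of computing with basic forms on $FM$ rather than on $\Cc$.
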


The \emph{contact ideal} $\Ii \subset \Omega^*(\Cc)$ is the ideal --~with respect to the exterior product~-- generated by $\lac \theta^0, d\theta^0 \rac$. Lifting $e_0$ to an element $(e_0, e_1, e_2)$ of $FM$ with dual basis $(\theta^0, \theta^1, \theta^2)$, the $1$-form $\theta^0$ on $FM$ coincides with the lifting of the contact form, which is why they are denoted the same.

If $f: \Sigma \into M$ is an immersion of a simply connected surface $\Sigma$, there exists a \emph{legendrian lift} $N: \Sigma \into \Cc$ of $f$ to $\Cc$, which means that $N$ verifies $N^* \theta^0= 0$ and $f= \pi \circ N$. Note that, by construction of $\theta^0$, the lift $N$ is unique up to sign and it is a normal vector to $f$. Moreover, the condition $N^* \theta^0= 0$ implies $N^* d\theta^0= 0$, and thus $N^* \Ii= \lac 0 \rac$.

\medskip

The study is local, so we can assume $\Sigma$ is compact, eventually with boundary. Consider the functional $\Aa$ such that:
\[
\Aa(\Sigma)= \int_{\Sigma} N^* 	\Lambda_0 \quad \text{with} \quad \Lambda_0= e_0 \pint \vol_M,
\]
where $\vol_M$ is the volume form on $M$. We call $\Lambda_0$ the \emph{lagrangian} of the functional. Actually, $\Aa$ is the area functional, since an expression of $\Lambda_0$ in $FM$ is $\Lambda_0= \theta^1 \wedge \theta^2$, the volume form being $\vol_M= \theta^0 \wedge \theta^1 \wedge \theta^2$. A classical result on the area functional is the following:

\begin{proposition}

Let $f: \Sigma \into M$ be an immersion with legendrian lift $N: \Sigma \into \Cc$. Then $f$ is a critical point of the functional $\Aa$ if and only if the \emph{Euler-Lagrange condition} $N^* \Psi_0$ is satisfied, where:
\[
\Psi_0= -\omega^2_0 \wedge \theta^1- \omega^0_1 \wedge \theta^2
\]
is the \emph{Euler-Lagrange operator}. Moreover, if $f$ is a critical point of $\Aa$, then the Euler-Lagrange condition means that it is a minimal immersion.

\end{proposition}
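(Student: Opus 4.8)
The plan is to compute the first variation of $\Aa$ directly on the contact manifold, using Cartan's formula to reduce everything to the single exterior derivative $d\Lambda_0$, and then to read off the Euler-Lagrange condition from the way $d\Lambda_0$ sits inside the contact ideal. First I would fix a variation $f_t$ of $f$ with $f_0= f$, compactly supported (or fixing $\partial\Sigma$), and lift it to a family of legendrian immersions $N_t: \Sigma \into \Cc$ with $N_0= N$. Writing $V= \frac{\partial}{\partial t}\big|_{t=0} N_t$ for the variation field along $N$, naturality of the pullback gives
\[
\frac{d}{dt}\Big|_{t=0}\int_\Sigma N_t^*\Lambda_0= \int_\Sigma N^*\big(\Ll_V \Lambda_0\big).
\]
Cartan's magic formula $\Ll_V\Lambda_0= d(V\pint\Lambda_0)+ V\pint d\Lambda_0$ together with Stokes' theorem turns the first summand into $\int_{\partial\Sigma} N^*(V\pint\Lambda_0)$, which vanishes for our variations, so the entire first variation is carried by $\int_\Sigma N^*(V\pint d\Lambda_0)$.

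The heart of the proof is the computation of $d\Lambda_0$. Starting from $\Lambda_0= \theta^1\wedge\theta^2$ and substituting the structure equations for $d\theta^1$ and $d\theta^2$, every term containing $\omega^1_2$ drops out (because $\theta^1\wedge\theta^1= \theta^2\wedge\theta^2= 0$) and one is left with
\[
d\Lambda_0= \theta^0\wedge\big(-\omega^2_0\wedge\theta^1- \omega^0_1\wedge\theta^2\big)= \theta^0\wedge\Psi_0.
\]
Expanding $V\pint d\Lambda_0= (V\pint\theta^0)\,\Psi_0- \theta^0\wedge(V\pint\Psi_0)$ and pulling back by $N$, the legendrian condition $N^*\theta^0= 0$ kills the second term, so that $N^*(V\pint d\Lambda_0)= \theta^0(V)\,N^*\Psi_0$. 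By the very definition of $\theta^0$, the scalar $\theta^0(V)= \lan e_0, \pi_*V\ran$ is exactly the normal component of the variation, and a normal variation $\phi\,e_0$ with $\phi$ an arbitrary compactly supported function is always realizable by legendrian lifts, while the tangential part of $V$ contributes nothing. The fundamental lemma of the calculus of variations then shows that $f$ is critical if and only if $N^*\Psi_0= 0$.

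For the last assertion I would use that the $\omega^i_j$ are the Levi-Civita connection forms, so that along the surface --~where $\theta^1, \theta^2$ is a coframe~-- the pullbacks satisfy $N^*\omega^0_i= \sum_j h_{ij}\theta^j$ with $(h_{ij})$ the symmetric second fundamental form in the frame $(e_1, e_2)$. Substituting this together with $\omega^2_0= -\omega^0_2$ into $\Psi_0$ collapses the off-diagonal terms and yields $N^*\Psi_0= -(h_{11}+ h_{22})\,\theta^1\wedge\theta^2= -2H\,N^*\Lambda_0$, where $H$ is the mean curvature; hence the Euler-Lagrange condition $N^*\Psi_0= 0$ is equivalent to $H\equiv 0$, i.e. to $f$ being minimal. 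I expect the main subtlety to be the variational step rather than the algebra: one must check carefully that admissible variations are required only to preserve the legendrian (integral) condition, that among these the normal scalar $\theta^0(V)$ ranges over all compactly supported functions, and that purely tangential variations leave $\Aa$ unchanged, so that the fundamental lemma legitimately produces the pointwise condition $N^*\Psi_0= 0$.
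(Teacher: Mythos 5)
The paper offers no proof of this proposition at all: it is stated as ``a classical result on the area functional,'' with the details implicitly deferred to the reference~\cite{BrGriGro}. Your argument is correct and is precisely the standard one from that source: the decomposition $d\Lambda_0= \theta^0 \wedge \Psi_0$ (the Poincar\'e--Cartan form), Cartan's formula plus Stokes to reduce the first variation to $\int_\Sigma \theta^0(V)\, N^* \Psi_0$, and Cartan's lemma applied to $N^* d\theta^0= 0$ to write $N^* \omega^0_i= \sum_j h_{ij} \theta^j$ and conclude $N^* \Psi_0= -(h_{11}+ h_{22})\, N^* \Lambda_0$, so that the Euler--Lagrange condition is exactly minimality. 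All three steps, including the sign bookkeeping with $\omega^2_0= -\omega^0_2$ and the realizability of arbitrary compactly supported normal variations, check out.
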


In the following, fix $H$ a constant, eventually zero. The variational characterization of CMC-$H$ immersions in $M$ can be deduced from the previous result on minimal immersions by adding a Lagrange multiplier to grasp the volume constraint when $H \neq 0$. Remark first that:

\begin{lemma}

In any riemannian manifold $(M, g)$ of finite dimension, there exists locally a vector field $\Xi \in \mathfrak{X}(M)$, which we call a \emph{volume field}, such that:
\[
\div_M \Xi= 1.
\]

\end{lemma}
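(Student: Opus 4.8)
The plan is to reduce the statement to a single equation on differential forms, using the intrinsic characterization of the divergence through the volume form. Recall that for any vector field $X$ on $M$, Cartan's formula gives $\Ll_X \vol_M = d(X \pint \vol_M)+ X \pint d\vol_M$; since $\vol_M$ is a top-degree form it is closed, so $d\vol_M= 0$ and hence $\Ll_X \vol_M= d(X \pint \vol_M)$. Combined with the defining relation $\Ll_X \vol_M= (\div_M X)\, \vol_M$, the sought condition $\div_M \Xi= 1$ is therefore equivalent to the single equation $d(\Xi \pint \vol_M)= \vol_M$.

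I would then solve this equation in two moves. First, since $\vol_M$ is closed and the statement is local, Poincaré's lemma provides, on any contractible neighborhood, an $(n-1)$-form $\beta$ with $d\beta= \vol_M$, where $n= \dim M$. Second, it remains to realize $\beta$ as $\Xi \pint \vol_M$ for some $\Xi$, and this is possible because contraction with the volume form is a pointwise isomorphism: as $\vol_M$ is nowhere vanishing, the bundle map $TM \into \Lambda^{n-1} T^* M$, $X \mapsto X \pint \vol_M$, has trivial kernel, and both bundles having rank $n$ it is an isomorphism. Hence there exists a (unique) $\Xi \in \mathfrak{X}(M)$ with $\Xi \pint \vol_M= \beta$, and by construction $d(\Xi \pint \vol_M)= d\beta= \vol_M$, i.e. $\div_M \Xi= 1$.

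The argument contains no genuine obstacle; the only subtlety to keep in mind is that Poincaré's lemma guarantees the primitive $\beta$ only on a contractible neighborhood, which is precisely why the conclusion must be stated for a \emph{locally} defined field $\Xi$. One could alternatively bypass the cohomological step and exhibit $\Xi$ by hand in local coordinates, solving $\partial_1\big(\sqrt{\det g}\, \Xi^1\big)= \sqrt{\det g}$ by a single integration in the first variable with the remaining components set to zero; but the coordinate-free route above is cleaner and matches the spirit of the paper.
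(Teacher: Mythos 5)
Your argument is correct and complete. In fact the paper states this lemma \emph{without} proof, treating it as a standard fact, so there is no proof of record to compare with line by line; your write-up supplies exactly the missing justification. The reduction of $\div_M \Xi= 1$ to the equation $d(\Xi \pint \vol_M)= \vol_M$ (via $\Ll_{\Xi} \vol_M= d(\Xi \pint \vol_M)$, the volume form being closed as a top-degree form), the local primitive $\beta$ furnished by the Poincaré lemma, and the observation that $X \mapsto X \pint \vol_M$ is a bundle isomorphism $TM \into \Lambda^{n-1} T^* M$ (trivial kernel since $\vol_M$ is nowhere vanishing, equal ranks $n$) are all sound, and you correctly identify contractibility of the neighborhood as the sole reason the statement must be local. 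It is also worth noting that the ``natural volume fields'' the paper actually uses later, namely $\Xi= x_3 E_3$ both in $\E^3(\kappa, \tau)$ and in $\Sol_3$, are instances of the coordinate construction you sketch at the end: in both models $E_3= \der{x_3}$ satisfies $\div_M E_3= 0$ (equivalently, $\sqrt{\det g}$ does not depend on $x_3$) and $dx_3(E_3)= 1$, so a single integration in the $x_3$ variable yields $\div_M (x_3 E_3)= 1$. Thus your abstract route proves the lemma in full generality, while your coordinate remark explains where the paper's explicit choices of $\Xi$ come from.
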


We use such a vector field to write the lagrangian involved in the variational characterization of CMC-$H$ immersion:

\begin{proposition}

Let $\Xi$ be a volume field on $M$. The lagrangian $\Lambda$ defined on $\Cc$ by:
\begin{equation} \label{eq:lagrcmc}
\Lambda= \Lambda_0+ 2H\Lambda' \quad \text{with} \quad \Lambda'= \Xi \pint \vol_M,
\end{equation}
is associated to the variational problem of CMC-$H$ immersions in $M$. In other words, an immersion $f: \Sigma \into M$ with legendrian lift $N: \Sigma \into \Cc$ is a critical point of the functional associated to $\Lambda$ if and only if it is a CMC-$H$ immersion. Furthermore, the Euler-Lagrange operator writes $\Psi= \Psi_0+ 2H\Lambda_0$ and the Euler-Lagrange equation is $N^* \Psi= 0$.

\end{proposition}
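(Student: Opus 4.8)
The plan is to reduce the whole statement to two short algebraic identities on $FM$ and then invoke the same first-variation mechanism that underlies the area functional. First I would record the defining property of the volume field. Since $\vol_M$ is a top-degree form on the $3$-manifold $M$, Cartan's formula gives $\Ll_\Xi \vol_M = d(\Xi \pint \vol_M) + \Xi \pint d\vol_M = d\Lambda'$, while the lemma yields $\Ll_\Xi \vol_M = (\div_M \Xi)\,\vol_M = \vol_M$. Hence
\[
d\Lambda' = \vol_M = \theta^0 \wedge \theta^1 \wedge \theta^2 = \theta^0 \wedge \Lambda_0.
\]
By Stokes' theorem this already shows that $\int_\Sigma N^* \Lambda'$ computes the algebraic volume enclosed by the immersion, so that the functional attached to $\Lambda= \Lambda_0+ 2H\Lambda'$ is the area-plus-volume functional whose critical points are the CMC-$H$ immersions, the factor $2H$ playing the role of the Lagrange multiplier.

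Next I would compute the Euler-Lagrange operator. The minimal case already supplies the identity $d\Lambda_0= \theta^0 \wedge \Psi_0$ (this is the structure-equation computation behind the proposition on $\Aa$). Combining it with the identity above gives
\[
d\Lambda= d\Lambda_0+ 2H\,d\Lambda'= \theta^0 \wedge \Psi_0+ 2H\,\theta^0 \wedge \Lambda_0= \theta^0 \wedge \lp \Psi_0+ 2H\Lambda_0 \rp,
\]
which exhibits $d\Lambda$ in the form $\theta^0 \wedge \Psi$ with $\Psi= \Psi_0+ 2H\Lambda_0$. I would note in passing that, although $\Lambda'$ itself depends on the non-unique choice of $\Xi$, the operator $\Psi$ does not: it only sees $d\Lambda'= \vol_M$, which is intrinsic, so two volume fields differ by a closed term that alters the functional by a boundary contribution alone.

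Then I would run the first variation exactly as for $\Aa$. For a variation of $N$ generated by a vector field $V$ on $\Cc$, Cartan's formula followed by Stokes gives
\[
\int_{\Sigma} N^*(\Ll_V \Lambda)= \int_{\Sigma} N^*(V \pint d\Lambda)+ \int_{\partial \Sigma} N^*(V \pint \Lambda),
\]
and the boundary term dies for admissible fixed-boundary variations. Writing $V \pint d\Lambda= V \pint (\theta^0 \wedge \Psi)= \theta^0(V)\,\Psi- \theta^0 \wedge (V \pint \Psi)$ and pulling back along the legendrian lift, the relation $N^* \theta^0= 0$ kills the second summand and leaves $\big{(} \theta^0(V) \circ N \big{)}\, N^* \Psi$. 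Since the normal part of an admissible variation is free, this vanishes for every $V$ if and only if $N^* \Psi= 0$, which is the asserted Euler-Lagrange equation.

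Finally I would translate $N^* \Psi= 0$ into the geometric condition. Pulling back the structure forms along $N$ expresses $N^* \omega^0_1$ and $N^* \omega^2_0$ in terms of the second fundamental form of $f$; the computation that identifies $N^* \Psi_0= 0$ with minimality shows more precisely that $N^* \Psi_0= -2H_f\, N^* \Lambda_0$, where $H_f$ is the mean curvature of $f$ and $N^* \Lambda_0$ is its area form. Hence $N^* \Psi= 2(H- H_f)\, N^* \Lambda_0$, which vanishes exactly when $H_f \equiv H$. The main obstacle is this last bookkeeping step: keeping the sign and normalization conventions for the connection forms and the shape operator consistent, so that the trace produces precisely $2H$ and not $-2H$ or $H$; once $d\Lambda'= \vol_M$ is in hand, everything else is short.
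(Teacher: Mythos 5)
Your proof is correct and follows essentially the route the paper intends: the paper states this proposition without proof, as a classical result whose details it defers to the exterior-differential-systems framework of Bryant--Griffiths--Grossman, and your argument --- deriving $d\Lambda' = \vol_M$ from $\div_M \Xi = 1$ via Cartan's formula, obtaining $d\Lambda = \theta^0 \wedge \Psi$ with $\Psi = \Psi_0 + 2H\Lambda_0$ from the structure equations, running the first variation against legendrian lifts so that only the normal component $\theta^0(V)$ survives, and identifying $N^*\Psi_0 = -2H_f\, N^*\Lambda_0$ via Cartan's lemma applied to $N^*d\theta^0 = 0$ --- is precisely that standard argument. The one caveat you flag yourself, consistency of sign conventions for the connection forms and the shape operator so that the trace yields $2H$ rather than $-2H$, is indeed the only bookkeeping point, and it is fixed by the paper's choice of $N$ as the unit normal orienting the immersion.
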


We define the \emph{Euler-Lagrange system} as the differential ideal $\Ee \subset \Omega^*(\Cc)$ generated by $\lac \theta^0, d\theta^0, \Psi \rac$. Hence, to determine a Noether form related to a minimal or CMC immersion $f: \Sigma \into M$, we only need to compute a class of forms on $\Cc$ modulo the ideal $\Ee$ and pull it back in $\Omega^1(\Sigma)$.

\subsection{Symmetries and Noether forms}

We call a \emph{divergence symmetry} of the variational problem with lagrangian $\Lambda$ defined by~\eqref{eq:lagrcmc}, any element $S \in \mathfrak{X}(\Cc)$ for which there exists a class $\Phi_S \in H^1(\Cc)$ such that $\Ll_S \Lambda \equiv d\phi \mod \Ee$, for an arbitrary $\phi \in \Phi_S$. Noether theorem states then:

\begin{theorem}[Noether, 1918~\cite{No}]

Any divergence symmetry $S$ is in one-to-one correspondence with a class of $1$-forms $\mu_S \in H^1(\Cc)/ \Ee$ defined by:
\[
\mu_S= S\pint \Lambda- \phi \quad \text{in } H^1(\Cc)/ \Ee \quad \text{with} \quad \phi \in \Phi_S.
\]
Moreover, if $N: \Sigma \into \Cc$ is the legendrian lift of a critical point of $\Lambda$ --~i.e. a CMC-$H$ immersion~--, then the pull back $N^* \mu_S$ is a closed form on $\Sigma$ and the quantity:
\[
\sigma_S(c)= \int_c N^* \mu_S
\]
is the \emph{Noether invariant} or \emph{conserved quantity} associated to $S$ along the cycle $c \in H_1(\Sigma)$.

\end{theorem}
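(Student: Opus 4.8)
The plan is to apply Cartan's magic formula to the $2$-form $\Lambda$ and reduce the whole statement to a single structural identity. Setting $\mu_S = S \pint \Lambda - \phi$ for a representative $\phi \in \Phi_S$, Cartan's formula $\Ll_S \Lambda = d(S\pint\Lambda) + S\pint d\Lambda$ gives
\[
d\mu_S = \lp \Ll_S \Lambda - d\phi \rp - S \pint d\Lambda.
\]
By the very definition of a divergence symmetry the first parenthesis is $\equiv 0 \mod \Ee$, so everything hinges on showing that the remaining term satisfies $S \pint d\Lambda \in \Ee$.

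The computational heart of the argument --~and the step I expect to be the main obstacle~-- is the identity $d\Lambda = \theta^0 \wedge \Psi$. I would verify it on $FM$ in two pieces. Differentiating $\Lambda_0 = \theta^1 \wedge \theta^2$ and inserting the structure equations for $d\theta^1$ and $d\theta^2$, the $\omega^1_2$ contributions drop out by $\theta^i \wedge \theta^i = 0$ and one is left with $d\Lambda_0 = \theta^0 \wedge \Psi_0$ after recognizing $\Psi_0$. For $\Lambda' = \Xi \pint \vol_M$, the defining property $\div_M \Xi = 1$ yields $\Ll_\Xi \vol_M = \vol_M$, and since $\vol_M$ is closed this gives $d\Lambda' = \vol_M = \theta^0 \wedge \Lambda_0$. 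Adding the two pieces with weight $2H$ produces $d\Lambda = \theta^0 \wedge (\Psi_0 + 2H\Lambda_0) = \theta^0 \wedge \Psi$. With this factorization in hand, the term $S \pint d\Lambda$ is controlled by the graded Leibniz rule for the interior product:
\[
S \pint \lp \theta^0 \wedge \Psi \rp = \theta^0(S)\, \Psi - \theta^0 \wedge \lp S \pint \Psi \rp.
\]
The first summand is a function multiple of $\Psi$ and the second is $\theta^0$ wedged with a $1$-form, so both lie in $\Ee$ because $\theta^0$ and $\Psi$ are among its generators. This is the crucial subtlety: an interior product does not preserve a differential ideal in general, but because $d\Lambda$ splits as a product of two generators, each term of the Leibniz expansion retains one of them. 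Hence $d\mu_S \equiv 0 \mod \Ee$.

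It remains to pull back along the legendrian lift $N$ of a CMC-$H$ immersion. Since $N$ is legendrian and critical we have $N^* \theta^0 = 0$, $N^* d\theta^0 = 0$ and $N^* \Psi = 0$; as $\Ee$ is the differential ideal these forms generate --~and $N^* d\Psi = d(N^*\Psi) = 0$~-- every element of $\Ee$ pulls back to zero, so $N^* \Ee = \lac 0 \rac$. Therefore $d(N^* \mu_S) = N^*(d\mu_S) = 0$, so $N^* \mu_S$ is closed on $\Sigma$, and by Stokes $\sigma_S(c) = \int_c N^* \mu_S$ depends only on $c \in H_1(\Sigma)$. Finally, replacing $\phi$ by another representative of $\Phi_S$ alters $\mu_S$ only by an exact form, which changes neither its class in $H^1(\Cc)/\Ee$ nor the integral $\sigma_S(c)$; the one-to-one correspondence $S \leftrightarrow \mu_S$ is then read directly off the construction.
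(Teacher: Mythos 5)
Your argument is correct, but there is an important point of comparison to make: the paper itself gives \emph{no} proof of this statement --- it is quoted as a classical result (Noether 1918, in the contact-geometric formulation of~\cite{BrGriGro,Ro2}) --- so what you have written supplies the argument the paper delegates to the literature. Your route is exactly the one the paper's framework is built for, and the computational core checks out: from the structure equations one gets $d\Lambda_0= \theta^0 \wedge \Psi_0$ on $FM$ (the $\omega^1_2$ terms do cancel as you say), and $\div_M \Xi= 1$ gives $d\Lambda'= \vol_M= \theta^0 \wedge \Lambda_0$, whence $d\Lambda= \theta^0 \wedge \Psi$. Your observation that the Leibniz expansion $S\pint(\theta^0 \wedge \Psi)= \theta^0(S)\Psi- \theta^0 \wedge (S\pint \Psi)$ leaves one generator of $\Ee$ in each term is precisely what makes the interior-product step legitimate, since contraction does not preserve an ideal in general. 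This factorization of $d\Lambda$ is also what justifies, a posteriori, the paper's definition of the Euler--Lagrange system $\Ee$, and it is implicitly relied upon in the paper's own proof of Theorem~\ref{thm:noethform}, which applies Cartan's formula in the same spirit; so your proof is the natural companion to that argument rather than a departure from it.

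One caveat on scope: what you actually establish is that $S \mapsto \mu_S$ is a well-defined map into $H^1(\Cc)/\Ee$ (independent of the representative $\phi \in \Phi_S$), and that the pullback of $\mu_S$ by the legendrian lift of a critical point is closed, with periods depending only on the homology class of the cycle. The phrase ``one-to-one correspondence,'' if read as a bijection between (equivalence classes of) divergence symmetries and conservation laws, would require more --- injectivity of $S \mapsto \mu_S$ and a converse construction --- and your closing sentence does not supply that. This is not a defect relative to the paper, which states the correspondence only through the defining formula and uses nothing stronger downstream; but it is worth being aware that the full Noether theorem asserts an isomorphism, and your proof covers only the direction needed here.
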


Note that the closure condition for $N^* \mu_S$ is the \emph{conservation law} mentioned in the introduction. The expression of the Noether form can be made completely explicit in the case of Killing fields:

\begin{theorem} \label{thm:noethform}

Let $S \in \mathfrak{X}(M)$ be a Killing field. Then the extension of $S$ to $\Cc$ is a divergence symmetry. Furthermore, if $F \in \mathfrak{X}(\Cc)$ is the extension of a \emph{potential vector} of $S$ --~i.e. a field $F \in \mathfrak{X}(M)$ such that $\rot_M F= S$~--, then the corresponding Noether form $\mu_S$ writes:
\[
\mu_S= S\pint \Lambda_0- 2HF^{\flat} \quad \text{in } H^1(\Cc)/ \Ee,
\]
and this expression does not depend on the choice of the potential vector.

\end{theorem}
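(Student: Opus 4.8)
The plan is to produce the Noether form directly from Noether's theorem by writing down an explicit primitive $\phi$ for $\Ll_S \Lambda$. I would first collect three identities. Since $S$ is Killing, its flow acts by isometries of $M$ and lifts to an action on $\Cc$ preserving every metric-determined object; in particular $\Ll_S \vol_M = 0$ and, because $\Lambda_0 = e_0 \pint \vol_M$ is built from the tautological unit normal and the volume form, $\Ll_S \Lambda_0 = 0$. Next, Cartan's formula together with $\div_M \Xi = 1$ gives $d\Lambda' = d(\Xi \pint \vol_M) = \Ll_\Xi \vol_M = (\div_M \Xi)\,\vol_M = \vol_M$. Finally, the potential-vector hypothesis $\rot_M F = S$ combined with the three-dimensional identity $d(X^\flat) = (\rot_M X) \pint \vol_M$ yields the key relation $d(F^\flat) = S \pint \vol_M$.

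With these in hand I would compute $\Ll_S \Lambda$. As $\Ll_S \Lambda_0 = 0$, only the multiplier term contributes, so $\Ll_S \Lambda = 2H\,\Ll_S \Lambda'$. Applying Cartan's formula to $\Lambda'$ and using $d\Lambda' = \vol_M$ gives $\Ll_S \Lambda' = d(S \pint \Lambda') + S \pint \vol_M$, and substituting $S \pint \vol_M = d(F^\flat)$ collapses the right-hand side into a single exact term, $\Ll_S \Lambda' = d(S \pint \Lambda' + F^\flat)$. Thus $\Ll_S \Lambda = d\phi$ with $\phi = 2H(S \pint \Lambda' + F^\flat)$, so $\Ll_S \Lambda$ is exact and a fortiori $\Ll_S \Lambda \equiv d\phi \mod \Ee$; this establishes that the extension of $S$ is a divergence symmetry.

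The Noether form is then $\mu_S = S \pint \Lambda - \phi$. Expanding $S \pint \Lambda = S \pint \Lambda_0 + 2H(S \pint \Lambda')$ and subtracting $\phi = 2H(S \pint \Lambda') + 2H F^\flat$, the $S \pint \Lambda'$ terms cancel and I am left precisely with $\mu_S = S \pint \Lambda_0 - 2H F^\flat$, as asserted. For the independence statement, if $F_1$ and $F_2$ are two potential vectors then $\rot_M(F_1 - F_2) = 0$, so $F_1 - F_2$ is locally a gradient $\nabla g$ and $(F_1 - F_2)^\flat = dg$ is exact; the two candidate expressions therefore differ by $-2H\,dg$, which is trivial in $H^1(\Cc)/\Ee$.

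I expect the only delicate point to be the vanishing $\Ll_S \Lambda_0 = 0$: rather than grinding it out through the structure equations for $d\theta^i$ and $\omega^i_j$, I would justify it conceptually from the isometry invariance of the whole construction on $\Cc$, which is the cleanest route. The genuine mechanism of the proof, however, is the curl identity $d(F^\flat) = S \pint \vol_M$, which is exactly what turns the obstruction $S \pint \vol_M$ into an exact form and lets the Lagrange-multiplier term be absorbed into the primitive $\phi$.
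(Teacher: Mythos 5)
Your proposal is correct and follows essentially the same route as the paper: both hinge on Cartan's formula applied to $\Lambda'$, the identity $d\Lambda'= \vol_M$ coming from $\div_M \Xi= 1$, and the three-dimensional curl relation $dF^{\flat}= S\pint \vol_M$ (which the paper writes via the Hodge star as $S\pint \vol_M= *S^{\flat}= *^2 dF^{\flat}$), yielding the same primitive $\phi= 2H(S\pint \Lambda'+ F^{\flat})$, the same cancellation giving $\mu_S= S\pint \Lambda_0- 2HF^{\flat}$, and the same closedness argument for independence of the potential vector. The only differences are cosmetic: the paper works modulo $\Ee$ rather than asserting $\Ll_S \Lambda_0= 0$ exactly as you do, and it also notes that $\div_M S= 0$ guarantees a potential vector exists, a point you take as given by hypothesis.
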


\begin{proof}
Since $S$ is Killing field on $M$, we have $\div_M S= 0$. Thus, there exists a field $F \in \mathfrak{X}(M)$ such that $S= \rot_M F$. Moreover, we have $\Ll_S \Lambda \equiv 2H\Ll_S \Lambda' \mod \Ee$, with by definition $\Ll_S \Lambda'= d(S\pint \Lambda')+ S\pint d\Lambda'$. If $*$ denotes the Hodge operator and $\cdot^{\flat}, \cdot^{\sharp}$ the musical isomorphisms, we know that:
\[
S= \rot_M F= (*dF^{\flat})^{\sharp} \quad \text{and} \quad S\pint d\Lambda'= S\pint \vol_M= *S^{\flat}= *^2 dF^{\flat}= dF^{\flat}.
\]
Hence:
\[
\Ll_S \Lambda \equiv 2Hd(S\pint \Lambda'+ F^{\flat}) \mod \Ee,
\]
and $S$ is indeed a divergence symmetry. The Noether form associated to $S$ is:
\[
\mu_S= S\pint \Lambda- 2H\lp S\pint \Lambda'+ F^{\flat} \rp= S\pint \Lambda_0- 2HF^{\flat} \quad \text{in } H^1(\Cc)/ \Ee,
\]
and this expression does not depend on the choice of the potential vector $F$ since $d(F^{\flat}- \wt{F}^{\flat})= 0$ for any other choice $\wt{F}$ of a potential vector of $S$.
\end{proof}

\begin{corollary}

If $M$ is a homogeneous space, the extensions of infinitesimal generators of $1$-parameter families of isometries are divergence symmetries.

\end{corollary}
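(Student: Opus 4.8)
The plan is to derive this corollary directly from Theorem~\ref{thm:noethform} by reducing the statement about isometry groups to a statement about Killing fields. The key observation is that an infinitesimal generator of a $1$-parameter family of isometries is, by definition, a Killing field: if $(\varphi_t)_t$ is a $1$-parameter subgroup of the isometry group of $M$, then the associated vector field $S= \frac{d}{dt}\big|_{t=0} \varphi_t$ satisfies $\Ll_S \lan \cdot, \cdot \ran= 0$, which is precisely the Killing condition. Thus the entire content of the corollary is to recognize that the hypotheses of Theorem~\ref{thm:noethform} are met for these generators.

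First I would recall that the flows of a Killing field consist of isometries; conversely, differentiating a $1$-parameter family of isometries at the identity yields a Killing field. This equivalence is standard (it is the infinitesimal characterization of isometries), so I would simply cite it. The only subtlety is that in a homogeneous space the relevant families of isometries are \emph{global} $1$-parameter subgroups coming from the (nontrivial) isometry group, and I would note that their infinitesimal generators restrict, on the local chart we work in, to Killing fields in $\mathfrak{X}(M)$ in the sense required by Theorem~\ref{thm:noethform}.

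Having identified each generator $S$ as a Killing field, I would then invoke Theorem~\ref{thm:noethform} verbatim: the extension of $S$ to $\Cc$ is a divergence symmetry, with Noether form $\mu_S= S\pint \Lambda_0- 2HF^{\flat}$ for any potential vector $F$ with $\rot_M F= S$. Since the theorem already guarantees the existence of such an $F$ (using $\div_M S= 0$, which holds because Killing fields are divergence-free) and the independence from the choice of $F$, nothing further needs to be checked. The homogeneity of $M$ plays no role in the proof mechanism itself beyond ensuring a rich supply of such isometries; it is what makes the corollary interesting rather than vacuous.

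I do not expect any genuine obstacle here, since the corollary is essentially a specialization of the preceding theorem. The only point requiring care is terminological: making explicit that ``infinitesimal generator of a $1$-parameter family of isometries'' and ``Killing field'' denote the same objects, so that the word ``extension'' in the corollary matches the ``extension to $\Cc$'' appearing in Theorem~\ref{thm:noethform}. Once that identification is stated, the proof is a one-line appeal to the theorem.
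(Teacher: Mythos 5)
Your proposal is correct and is exactly the argument the paper intends (the corollary is stated without proof precisely because it is this immediate specialization): infinitesimal generators of $1$-parameter families of isometries are Killing fields, so Theorem~\ref{thm:noethform} applies verbatim to their extensions. Nothing is missing, and your terminological care about identifying ``generator of isometries'' with ``Killing field'' is the only content the corollary requires.
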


Let $f: \Sigma \into M$ be an (oriented) CMC-$H$ immersion. We choose its legendrian lift $N: \Sigma \into \Cc$ so that it coincides with the unit normal to $f$. Since $df= e_1 \theta^1+ e_2 \theta^2$ and $*df= -e_2 \theta^1+ e_1 \theta^2$, we have:

\begin{proposition} \label{prop:noethsurf}

The pullback $N^* \mu_S$ is well defined in $H^1(\Sigma)$ and writes:
\[
N^* \mu_S= N^* \mu_S^0- 2HN^* \mu_S' \quad \text{with} \quad \mu_S^0= \lan S, *df \ran \quad \text{and} \quad \mu_S'= \lan F, df \ran.
\]
We call $\mu_S^0$ the \emph{minimal part} of the Noether form and $\mu_S'$ its \emph{CMC part}.

\end{proposition}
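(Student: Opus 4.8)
The plan is to pull back the abstract Noether form $\mu_S= S\pint \Lambda_0- 2HF^\flat$ from $\Cc$ to $\Sigma$ via the legendrian lift $N$, and to simplify each of the two summands separately using the frame relations $df= e_1\theta^1+ e_2\theta^2$ and $*df= -e_2\theta^1+ e_1\theta^2$ recorded just before the statement. First I would address the minimal part. Working in $FM$, the lagrangian is $\Lambda_0= \theta^1\wedge\theta^2$, so the interior product with the (extension of the) Killing field $S$ expands as $S\pint(\theta^1\wedge\theta^2)= \theta^1(S)\,\theta^2- \theta^2(S)\,\theta^1$, where $\theta^i(S)= \lan e_i, S\ran$ since the frame is orthonormal. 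Comparing this with $*df= -e_2\theta^1+ e_1\theta^2$, I expect the combination $\lan S, e_1\ran\theta^2- \lan S, e_2\ran\theta^1$ to be exactly $\lan S, *df\ran$ once we contract the $\R^3$-valued form $*df$ against $S$; this identifies $N^*(S\pint\Lambda_0)$ with $\mu_S^0= \lan S, *df\ran$.

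For the CMC part, the claim is $N^* F^\flat= \mu_S'= \lan F, df\ran$. Here I would use that $F^\flat$ is the $1$-form metrically dual to $F$, so $F^\flat= \lan F, e_0\ran\theta^0+ \lan F, e_1\ran\theta^1+ \lan F, e_2\ran\theta^2$ in the orthonormal coframe. Pulling back along the legendrian lift kills the $\theta^0$ component because $N^*\theta^0= 0$ by the legendrian condition, leaving $N^* F^\flat= \lan F, e_1\ran\,N^*\theta^1+ \lan F, e_2\ran\,N^*\theta^2$, which is precisely the contraction of $F$ against $df= e_1\theta^1+ e_2\theta^2$, i.e. $\lan F, df\ran$. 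Assembling the two computations gives $N^*\mu_S= N^*\mu_S^0- 2H\,N^*\mu_S'$ as asserted.

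Finally I would justify that $N^*\mu_S$ is well defined in $H^1(\Sigma)$. This rests on Theorem~\ref{thm:noethform}, which gives $\mu_S$ as a well-defined class in $H^1(\Cc)/\Ee$ independent of the potential vector $F$, together with the fact, noted in the discussion of the Euler-Lagrange system, that $N^*\Ee= \lac 0\rac$ whenever $N$ is the legendrian lift of a critical point: the conditions $N^*\theta^0= 0$, $N^*d\theta^0= 0$, and the Euler-Lagrange equation $N^*\Psi= 0$ annihilate the generators of $\Ee$. Hence pulling back descends to a well-defined closed form on $\Sigma$, and the ambiguity in $F$ contributes a closed form whose pullback is exact, so the cohomology class is unambiguous.

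The computations here are genuinely routine contractions in an orthonormal frame; the only point demanding care is bookkeeping of signs and the vector-valued nature of $df$ and $*df$. The main conceptual obstacle is to verify cleanly that the $\theta^0$-terms drop out under $N^*$ and that the residual two-dimensional contractions reproduce exactly the Riemannian inner products $\lan S, *df\ran$ and $\lan F, df\ran$ rather than some sign- or orientation-twisted variant; I would fix the orientation convention for $*$ on $\Sigma$ at the outset to avoid a spurious sign.
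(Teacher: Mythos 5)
Your proof is correct and takes essentially the same approach as the paper: the paper states this proposition as an immediate consequence of the frame identities $df= e_1 \theta^1+ e_2 \theta^2$ and $*df= -e_2 \theta^1+ e_1 \theta^2$ together with Theorem~\ref{thm:noethform}, and your argument simply makes explicit the routine contractions $S\pint(\theta^1 \wedge \theta^2)= \lan S, e_1 \ran \theta^2- \lan S, e_2 \ran \theta^1$, the vanishing of the $\theta^0$-component of $F^{\flat}$ under $N^*$, and the annihilation of the generators of $\Ee$ by the legendrian lift of a critical point.
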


In the cases of homogeneous spaces $\E^3(\kappa, \tau)$ and $\Sol_3$ (Sections~\ref{sec:noethe} and~\ref{sec:noeths} respectively), we denote $\sigma_i(\cdot)$, with $i= 1, 2, 3, R$, the Noether invariants corresponding to isometries ($1, 2, 3$ for translations and $R$ for the rotation when it exists). The \emph{flux} through a cycle $c \in H_1(\Sigma)$ is the vector $\sigma(c)= \big{(} \sigma_1(c), \sigma_2(c), \sigma_3(c) \big{)}$ and the \emph{torque} is the number $\sigma_R(c)$ when it exists.

If $\Ss(t)$ is a $1$-parameter family of isometries with infinitesimal generator $S$, we know from Proposition~\ref{prop:noethsurf} that determining the corresponding Noether form $\mu_S$ restricted to $f$ means actually computing $S$ and a potential vector $F$.

It is also interesting to study the relations between Noether invariant of congruent immersions --~i.e. immersions deduced from on another by the action of an isometry. Namely, considering $\Ss'(t)$ a(nother) $1$-parameter family of isometries, we want to compare the form $\mu_S$ in restriction to an immersion $f$ and the corresponding form denoted $\mu_S(\Ss'(t))$ in restriction to the immersion $\Ss'(t) \circ f$. Remark that:
\[
\mu_S(\Ss'(t))= \lan d\Ss'(t)^{-1} \cdot S(\Ss(t)), *df \ran- 2H\lan d\Ss'(t)^{-1} \cdot F(\Ss(t)), df \ran.
\]

\section{Noether forms in $\E^3(\kappa, \tau)$} \label{sec:noethe}

The spaces $\E^3(\kappa, \tau)$ are simply connected $3$-dimensional homogeneous spaces. They are characterized by real parameters $\kappa$ and $\tau$ such that $\kappa- 4\tau^2 \neq 0$. The model considered is $\Omega_{\kappa} \times \R \subset \R^3$, with generic coordinates $(w= x_1+ ix_2, x_3)$ and:
\[
\Omega_{\kappa}= \lac \begin{array}{ll}
\C                    & \text{if } \kappa \geq 0 \\
\D(2|\kappa|^{-1/ 2}) & \text{if } \kappa< 0
\end{array} \rr,
\]
endowed with the metric:
\[
ds^2= \lambda^2 |dw|^2+ \big{(} \tau \lambda (x_2 dx_1- x_1 dx_2)+ dx_3 \big{)}^2 \quad \text{with} \quad \lambda= \frac{1}{1+ \kappa' |w|^2} \quad \text{and} \quad \kappa'= \frac{\kappa}{4}.
\]

These spaces are riemannian fibrations of the \emph{base} $\Omega_{\kappa}$ for the natural projection $\E^3(\kappa, \tau) \into \Omega_{\kappa}$ on the first two coordinates. The parameter $\kappa$ can be interpreted as the curvature of the base and $\tau$ as the one of the fibration. Thus, the space $\E^3(\kappa, \tau)$ has the geometry of Berger spheres if $\kappa> 0$, the one of the Heisenberg group if $\kappa= 0$ and the geometry of the universal cover of $\mathrm{PSL}_2(\R)$ if $\kappa< 0$ --~in the latter however, the model used corresponds to the universal cover of $\mathrm{PSL}_2(\R)$ minus a fiber. In Section~\ref{sec:exples}, we focus on the Heisenberg group $\Nil_3= \E^3(0, 1/ 2)$ and the product space $\H^2 \times \R= \E^3(-1, 0)$.

We consider the orthonormal frame $(E_1, E_2, E_3)$ defined when $\tau \neq 0$ by:
\begin{gather*}
E_1= \frac{1}{\lambda} \lp \cos(\sigma x_3) \der{x_1}+ \sin(\sigma x_3) \der{x_2} \rp+ \tau \big{(} x_1 \sin(\sigma x_3)- x_2 \cos(\sigma x_3) \big{)} \der{x_3}, \\
E_2= \frac{1}{\lambda} \lp -\sin(\sigma x_3) \der{x_1}+ \cos(\sigma x_3) \der{x_2} \rp+ \tau \big{(} x_1 \cos(\sigma x_3)+ x_2 \sin(\sigma x_3) \big{)} \der{x_3} \\
\text{and} \quad E_3= \der{x_3} \quad \text{with} \quad \sigma= \frac{\kappa}{2\tau},
\end{gather*}
and if $\tau= 0$:
\[
E_1= \frac{1}{\lambda} \der{x_1}, \quad E_2= \frac{1}{\lambda} \der{x_2} \quad \text{and} \quad E_3= \der{x_3}.
\]
The space spanned by $E_1, E_2$ is said to be \emph{horizontal} and the vector field $E_3$ is a Killing field.

\subsection{Isometries and curl operator}

A natural volume field is $\Xi= x_3 E_3$. Note that in the case of Berger spheres ($\kappa> 0$ and $\tau \neq 0$), this field is not globally defined.

The isometry group of $\E^3(\kappa, \tau)$ is $4$-dimensional, generated by three $1$-parameter families of translations et one of rotations:
\begin{gather*}
\Ss_1(t)(w, x_3)= \lp \frac{t+ w}{1- \kappa' tw}, x_3+ \frac{4}{\sigma} \arctan \lp \frac{\kappa' tx_2}{1- \kappa' tx_1+ |1- \kappa' tw|} \rp \rp, \\
\Ss_2(t)(w, x_3)= \lp \frac{it+ w}{1+ i\kappa' tw}, x_3- \frac{4}{\sigma} \arctan \lp \frac{\kappa' tx_1}{1- \kappa' tx_2+ |1+ i\kappa' tw|} \rp \rp, \\
\Ss_3(t)(w, x_3)= (w, x_3+ t) \quad \text{and} \quad \Ss_R(t)(w, x_3)= \lp we^{it}, x_3 \rp \quad \text{with} \quad \sigma= \frac{\kappa}{2\tau}.
\end{gather*}
Infinitesimal generators of these families are respectively:
\begin{gather*}
S_1= \lp 1+ \kappa' (x_1^2- x_2^2) \rp \der{x_1}+ 2\kappa' x_1 x_2 \der{x_2}+ \tau x_2 \der{x_3}, \\
S_2= 2\kappa' x_1 x_2 \der{x_1}+ \lp 1- \kappa' (x_1^2- x_2^2) \rp \der{x_2}- \tau x_1 \der{x_3}, \\
S_3= \der{x_3} \quad \text{and} \quad S_R= -x_2 \der{x_1}+ x_1 \der{x_2}.
\end{gather*}

Let $X \in \mathfrak{X} \big{(} \E^3(\kappa, \tau) \big{)}$ be written as $X= X^1E_1+ X^2E_2+ X^3E_3$. The expression of $\rot X$ depends on $\tau$. If $\tau \neq 0$:
\begin{multline*}
\rot X= \lp dX^3(E_2)- dX^2(E_3)- \sigma X^1 \rp E_1+ \lp dX^1(E_3)- dX^3(E_1)- \sigma X^2 \rp E_2 \\
+\lp dX^2(E_1)- dX^1(E_2)- 2\tau X^3 \rp E_3,
\end{multline*}
and if $\tau= 0$ we have:
\begin{multline*}
\rot X= \lp dX^3(E_2)- dX^2(E_3) \rp E_1+ \lp dX^1(E_3)- dX^3(E_1) \rp E_2 \\
+\lp dX^2(E_1)- dX^1(E_2)+ 2\kappa' (x_2 X^1- x_1 X^2) \rp E_3.
\end{multline*}
For the horizontal translations and the rotation, the potential vectors are, if $\kappa \neq 0$:
\[
F_1= \frac{1}{\sigma} S_1^h+ \lambda x_2 E_3, \quad F_2= \frac{1}{\sigma} S_2^h- \lambda x_1 E_3 \quad \text{and} \quad F_R= \frac{1}{\sigma} S_R^h+ \frac{\lambda}{2\kappa'} E_3,
\]
where $\cdot^h$ denotes the horizontal part, and if $\kappa= 0$:
\[
F_1= (\tau x_1 x_2- x_3) E_2, \quad F_2= (\tau x_1 x_2+ x_3) E_1 \quad \text{and} \quad F_R= x_1 x_3 E_1+ x_2 x_3 E_2.
\]
The case of the vertical translation is discriminated by $\tau$:
\[
F_3= \lac \begin{array}{ll}
\dst -\frac{1}{2\tau} E_3                  & \text{if } \tau \neq 0 \espf \\
\dst -\frac{x_2}{2} E_1+ \frac{x_1}{2} E_2 & \text{if } \tau= 0
\end{array} \rr.
\]

\subsection{Evolution under the action of isometries} \label{subsec:evolisome}

\subsubsection{If $\tau \neq 0$}

We have the following behavior for the Noether forms:
\begin{gather*}
\mu_1(\Ss_1(t))= \mu_1(\Ss_3(t))= \mu_1, \quad \mu_1(\Ss_2(t))= \frac{1- \kappa' t^2}{1+ \kappa' t^2} \mu_1+ \frac{2t}{1+ \kappa' t^2} \lp 2\kappa' \mu_R+ \tau \mu_3 \rp \\
\text{and} \quad \mu_1(\Ss_R(t))= \cos t \mu_1- \sin t \mu_2, \\
\mu_2(\Ss_1(t))= \frac{1- \kappa' t^2}{1+ \kappa' t^2} \mu_2- \frac{2t}{1+ \kappa' t^2} \lp 2\kappa' \mu_R+ \tau \mu_3 \rp, \quad \mu_2(\Ss_2(t))= \mu_2(\Ss_3(t))= \mu_2, \\
\text{and} \quad \mu_2(\Ss_R(t))= \cos t \mu_2+ \sin t \mu_1,
\end{gather*} \begin{gather*}
\mu_3(\Ss_1(t))= \mu_3(\Ss_2(t))= \mu_3(\Ss_3(t))= \mu_3(\Ss_R(t))= \mu_3, \\
\mu_R(\Ss_1(t))= \frac{1- \kappa' t^2}{1+ \kappa' t^2} \mu_R+ \frac{t}{1+ \kappa' t^2} \lp \mu_2- \tau t \mu_3 \rp, \\
\mu_R(\Ss_2(t))= \frac{1- \kappa' t^2}{1+ \kappa' t^2} \mu_R- \frac{t}{1+ \kappa' t^2} \lp \mu_1+ \tau t \mu_3 \rp \\
\text{and} \quad \mu_R(\Ss_3(t))= \mu_R(\Ss_R(t))= \mu_R.
\end{gather*}

We deduce the values of Noether forms depending on the symmetries of the surface:

\begin{proposition}

Let $f: \Sigma \into \E^3(\kappa, \tau)$, $\tau \neq 0$, be a minimal or CMC immersion. We have the following assertions:

\begin{itemize}

	\item[(i)] Suppose $f$ is invariant under the action of a translation $\Ss_1(t)$ (resp. $\Ss_2(t)$). Then $\mu_2= \mu_3= 0$ (resp. $\mu_1= \mu_3= 0$) if $\kappa= 0$, and $\mu_2= \kappa \mu_R+ 2\tau \mu_3= 0$ (resp. $\mu_1= \kappa \mu_R+ 2\tau \mu_3= 0$) if $\kappa \neq 0$.

	\item[(ii)] Suppose $f$ is invariant for a rotation $\Ss_R(t)$. If $c$ is a cycle homologous to its image $\Ss_R(t) \cdot c$, then $\mu_1= \mu_2= 0$.

\end{itemize}

\end{proposition}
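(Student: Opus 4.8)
The plan is to read all three conclusions directly off the transformation formulæ of Section~\ref{subsec:evolisome}, the only genuinely geometric input being the passage from ``$f$ is invariant'' to an identity among the classes $\mu_1, \mu_2, \mu_3, \mu_R$ in $H^1(\Sigma)$. First I would record the guiding principle: if an isometry $g$ leaves $f$ invariant, then $g \circ f$ is a reparametrization $f \circ \psi$ of $f$, so the Noether form attached to the congruent immersion is the $\psi$-pullback of the one attached to $f$; since $N^*\mu_i$ is closed, this gives $\sigma_i(g \circ f; c) = \sigma_i(f; \psi(c))$ for every cycle $c$, where $\psi(c)$ is the image cycle $g \cdot c$. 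For a one-parameter family such as $\Ss_1(t)$ or $\Ss_2(t)$ the reparametrizations $\psi_t$ form a flow with $\psi_0 = \mathrm{id}$, so $\psi_t$ is homotopic to the identity and acts trivially on $H_1(\Sigma)$; hence invariance forces $\mu_i(\Ss_1(t)) = \mu_i$ (resp.\ with $\Ss_2$) as classes, for every $t$.

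For assertion (i) I would then substitute $\mu_i(\Ss_1(t)) = \mu_i$ into the two nontrivial relations of Section~\ref{subsec:evolisome}, namely those giving $\mu_2(\Ss_1(t))$ and $\mu_R(\Ss_1(t))$; the formulæ for $\mu_1$ and $\mu_3$ are automatically satisfied and carry no information. After clearing the denominator $1 + \kappa' t^2$, the equation from $\mu_R$ reads $\mu_2 = t(2\kappa'\mu_R + \tau\mu_3)$ and the one from $\mu_2$ reads $\kappa' t\,\mu_2 + (2\kappa'\mu_R + \tau\mu_3) = 0$; eliminating $\mu_2$ yields $(1 + \kappa' t^2)(2\kappa'\mu_R + \tau\mu_3) = 0$. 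As $1 + \kappa' t^2 \neq 0$ on the domain of the family, this gives $2\kappa'\mu_R + \tau\mu_3 = 0$ and then $\mu_2 = 0$. Recalling $2\kappa' = \kappa/2$, the first relation is $\kappa\mu_R + 2\tau\mu_3 = 0$ when $\kappa \neq 0$, while for $\kappa = 0$ it degenerates to $\tau\mu_3 = 0$, i.e.\ $\mu_3 = 0$ since $\tau \neq 0$; together with $\mu_2 = 0$ this is exactly the stated dichotomy. The $\Ss_2(t)$ case is identical after exchanging $\mu_1$ and $\mu_2$ and tracking the sign changes in the corresponding formulæ, giving $\mu_1 = 0$ with $\kappa\mu_R + 2\tau\mu_3 = 0$ (resp.\ $\mu_1 = \mu_3 = 0$).

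For assertion (ii) the same principle applies, but a single rotation $\Ss_R(t)$ need not be embedded in a symmetry group of $f$, so its reparametrization $\psi$ may act nontrivially on $H_1(\Sigma)$; this is exactly where the hypothesis $c \sim \Ss_R(t)\cdot c$ intervenes. Under it, $\sigma_i(f; \psi(c)) = \sigma_i(f; c)$, so evaluating the formulæ $\mu_1(\Ss_R(t)) = \cos t\,\mu_1 - \sin t\,\mu_2$ and $\mu_2(\Ss_R(t)) = \cos t\,\mu_2 + \sin t\,\mu_1$ on $c$ produces the homogeneous system $(1-\cos t)\sigma_1(c) + \sin t\,\sigma_2(c) = 0$ and $-\sin t\,\sigma_1(c) + (1-\cos t)\sigma_2(c) = 0$. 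Its determinant is $2(1-\cos t)$, nonzero whenever $\Ss_R(t)$ is a genuine rotation, so $\sigma_1(c) = \sigma_2(c) = 0$, i.e.\ $\mu_1 = \mu_2 = 0$ along $c$.

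I expect the main obstacle to be conceptual rather than computational: justifying cleanly the passage from geometric invariance of the surface to the algebraic identities $\mu_i(\Ss'(t)) = \mu_i$, and in particular handling the $H_1(\Sigma)$-action so as to explain why the connected families in (i) impose a relation valid on all cycles while the isolated rotation in (ii) genuinely needs the hypothesis $c \sim \Ss_R(t)\cdot c$. Once this bookkeeping is in place, extracting the relations is a short linear-algebra manipulation of the formulæ already established.
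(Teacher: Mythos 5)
Your proof is correct and takes essentially the same route the paper intends: the paper offers no explicit proof, stating the proposition as an immediate deduction from the transformation formulæ of Subsection~\ref{subsec:evolisome}, and your elimination argument (yielding $(1+\kappa' t^2)(2\kappa'\mu_R+\tau\mu_3)=0$, hence $\mu_2=0$ together with $\kappa\mu_R+2\tau\mu_3=0$, which degenerates to $\mu_2=\mu_3=0$ when $\kappa=0$) and the $2\times 2$ system with determinant $2(1-\cos t)$ in case (ii) is exactly that deduction, carried out correctly. Your extra care in passing from geometric invariance to the identities $\mu_i(\Ss'(t))=\mu_i$ --- the flow of reparametrizations acting trivially on $H_1(\Sigma)$ for the connected families in (i), versus the isolated rotation in (ii) genuinely requiring the hypothesis $c\sim\Ss_R(t)\cdot c$ --- is a point the paper leaves implicit, and it is handled properly.
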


\subsubsection{If $\tau= 0$}

Noether forms associated to horizontal translations and the rotation evolve the same:
\begin{gather*}
\mu_1(\Ss_1(t))= \mu_1(\Ss_3(t))= \mu_1, \quad \mu_1(\Ss_2(t))= \frac{1- \kappa' t^2}{1+ \kappa' t^2} \mu_1+ \frac{4\kappa' t}{1+ \kappa' t^2} \mu_R, \\
\text{and} \quad \mu_1(\Ss_R(t))= \cos t \mu_1- \sin t \mu_2, \\
\mu_2(\Ss_1(t))= \frac{1- \kappa' t^2}{1+ \kappa' t^2} \mu_2- \frac{4\kappa' t}{1+ \kappa' t^2} \mu_R, \quad \mu_2(\Ss_2(t))= \mu_2(\Ss_3(t))= \mu_2, \\
\text{and} \quad \mu_2(\Ss_R(t))= \cos t \mu_2+ \sin t \mu_1, \\
\mu_R(\Ss_1(t))= \frac{1- \kappa' t^2}{1+ \kappa' t^2} \mu_R+ \frac{t}{1+ \kappa' t^2} \mu_2, \quad
\mu_R(\Ss_2(t))= \frac{1- \kappa' t^2}{1+ \kappa' t^2} \mu_R- \frac{t}{1+ \kappa' t^2} \mu_1 \\
\text{and} \quad \mu_R(\Ss_3(t))= \mu_R(\Ss_R(t))= \mu_R.
\end{gather*}
However, for the form corresponding to $S_3$, the minimal part remains the same and the CMC part verifies:
\begin{gather*}
\mu_3'(\Ss_1(t))= \frac{1}{|1- \kappa' tw|^2} \mu_3'+ \frac{t}{2\lambda |1- \kappa' tw|^2} \lan S_2, df \ran, \\
\mu_3'(\Ss_2(t))= \frac{1}{|1+ i\kappa' tw|^2} \mu_3'- \frac{t}{2\lambda |1+ i\kappa' tw|^2} \lan S_1, df \ran, \\
\mu_3'(\Ss_3(t))= \mu_3' \quad \text{and} \quad \mu_3'(\Ss_R(t))= \mu_3'.
\end{gather*}

\begin{proposition}

Let $f: \Sigma \into \E^3(\kappa, 0)$ be a minimal or CMC immersion. We have the following assertions:

\begin{itemize}

	\item[(i)] Suppose $f$ is invariant under the action of a translation $\Ss_1(t)$ (resp. $\Ss_2(t)$). Then $\mu_2= \mu_R= 0$ (resp. $\mu_1= \mu_R= 0$).

	\item[(ii)] Suppose $f$ is invariant for a rotation $\Ss_R(t)$. If $c$ is a cycle homologous to $\Ss_R(t) \cdot c$, then $\mu_1= \mu_2= 0$.

\end{itemize}

\end{proposition}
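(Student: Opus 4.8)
The plan is to read the conclusions directly off the evolution formulae for $\mu_1$, $\mu_2$ and $\mu_R$ established just above the statement, once the geometric invariance of $f$ is translated into an algebraic identity among these forms. The guiding principle is the following. Let $\Ss$ be one of the families $\Ss_1, \Ss_2, \Ss_R$ and suppose $f$ is invariant under $\Ss$, i.e. $\Ss(t)\big(f(\Sigma)\big)= f(\Sigma)$. Then $\Ss(t)\circ f= f\circ \psi_t$ for a reparametrization $\psi_t$ of $\Sigma$, namely the flow on $\Sigma$ of the vector field induced by the infinitesimal generator of $\Ss$, which is tangent to $f(\Sigma)$ precisely by invariance. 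Since the legendrian lift of $\Ss(t)\circ f$ is $\Ss(t)\circ N= N\circ \psi_t$ up to sign, the transformation formula preceding the statement gives $N^* \mu_S(\Ss(t))= \psi_t^*\, N^* \mu_S$, so that for every cycle $c$ one has $\sigma_S(\Ss(t))(c)= \int_{\psi_t(c)} N^* \mu_S$. Hence, as soon as $\psi_t(c)= \Ss(t)\cdot c$ is homologous to $c$, the closedness of $N^*\mu_S$ yields $\sigma_S(\Ss(t))(c)= \sigma_S(c)$.

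First I would treat $(i)$. For a horizontal translation the induced flow $\psi_t$ on $\Sigma$ has non-compact orbits and is isotopic to the identity, so $\psi_t(c)$ is homologous to $c$ for \emph{every} cycle; thus $\mu_2(\Ss_1(t))= \mu_2$ holds in $H^1(\Sigma)$ for all $t$. Substituting this into the evolution formula for $\mu_2(\Ss_1(t))$ and clearing the denominator $1+ \kappa' t^2$ gives the identity
\[
2\kappa' t^2\, \mu_2+ 4\kappa' t\, \mu_R= 0 \quad \text{in } H^1(\Sigma), \text{ for all } t.
\]
Since $\E^3(\kappa, 0)$ requires $\kappa- 4\tau^2= \kappa \neq 0$, hence $\kappa' \neq 0$, matching the coefficients of $t^2$ and of $t$ in this polynomial identity forces $\mu_2= \mu_R= 0$. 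The case of $\Ss_2(t)$ is identical, using the formula for $\mu_1(\Ss_2(t))$ and yielding $\mu_1= \mu_R= 0$.

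Next I would treat $(ii)$. Here the orbits of $\Ss_R$ are circles, so I do not assert that $\psi_t$ is isotopic to the identity; instead I invoke the standing hypothesis that $c$ is homologous to $\Ss_R(t)\cdot c$, which by the principle above gives $\sigma_j(\Ss_R(t))(c)= \sigma_j(c)$ for $j= 1, 2$. Integrating the evolution formulae for $\mu_1(\Ss_R(t))$ and $\mu_2(\Ss_R(t))$ along $c$ then produces the linear system
\[
(1- \cos t)\, \sigma_1(c)+ \sin t\, \sigma_2(c)= 0, \qquad -\sin t\, \sigma_1(c)+ (1- \cos t)\, \sigma_2(c)= 0,
\]
whose determinant is $2(1- \cos t)$. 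For $t \not\equiv 0 \pmod{2\pi}$ this is nonzero, so $\sigma_1(c)= \sigma_2(c)= 0$, i.e. $\mu_1= \mu_2= 0$.

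The main obstacle is not the algebra, which is immediate, but the passage from the geometric hypothesis to the identity $N^* \mu_S(\Ss(t))= \psi_t^*\, N^* \mu_S$ among the restricted forms, together with the control of the homology class of $\psi_t(c)$. For the translations this rests on the induced flow being complete and isotopic to the identity, so that the equality upgrades to an equality of classes on all of $H^1(\Sigma)$ and requires no extra assumption; for the rotation, whose orbits are compact, the isotopy argument is unavailable and this is exactly why the explicit hypothesis $c \sim \Ss_R(t)\cdot c$ appears in $(ii)$ and cannot be dropped.
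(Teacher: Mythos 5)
Your proposal is correct, and it follows essentially the same route as the paper: the paper gives no separate proof of this proposition, deducing it directly from the evolution formulas listed immediately above it, exactly as you do. Your algebra checks out: for (i), invariance under the whole family $\Ss_1(t)$ gives $2\kappa' t^2 \mu_2+ 4\kappa' t \mu_R= 0$ for all $t$, and since $\tau= 0$ forces $\kappa= \kappa- 4\tau^2 \neq 0$, hence $\kappa' \neq 0$, both classes vanish; for (ii), the $2\times 2$ system with determinant $2(1- \cos t)$ gives $\sigma_1(c)= \sigma_2(c)= 0$ whenever $t$ is not a multiple of $2\pi$. The bridging step you make explicit --- invariance gives $N^* \mu_S(\Ss(t))= \psi_t^* N^* \mu_S$, hence equality of periods along cycles homologous to their images --- is exactly the content the paper leaves implicit.

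One correction to your closing discussion, however. The dichotomy you invoke (non-compact orbits for translations versus compact orbits for rotations) is not what separates (i) from (ii). Any flow $\psi_t$ of a complete vector field on $\Sigma$ is isotopic to the identity via $s \mapsto \psi_s$, $s \in [0, t]$, regardless of whether its orbits are compact; so if $f$ were invariant under the \emph{whole} rotation family, the induced flow on $\Sigma$ would act trivially on $H_1(\Sigma)$ and the homology hypothesis in (ii) would be automatic, by your own argument for (i). The hypothesis in (ii) has content because ``invariant for a rotation $\Ss_R(t)$'' covers invariance under a \emph{single} rotation (a discrete symmetry, e.g. by angle $2\pi/ k$): in that case there is no induced flow on $\Sigma$, only one diffeomorphism $\psi_t$, which can act nontrivially on $H_1(\Sigma)$. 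Your proof of (ii) is compatible with this reading, since it works at fixed $t$; by contrast, your proof of (i), which matches coefficients of a polynomial identity in $t$, genuinely requires invariance under the whole translation family --- that is the correct reading of the statement, but it is worth saying so explicitly.
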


\section{Noether forms in $\Sol_3$} \label{sec:noeths}

The space $\Sol_3$ is the $3$-dimensional Lie group:
\[
\Sol_3= \lp \lac (x_1, x_2, x_3) \in \R^3 \rac, \ ds^2= e^{2x_3} dx_1^2+ e^{-2x_3} dx_2^2+ dx_3^2 \rp.
\]
We can also define a canonical frame $(E_1, E_2, E_3)$ on $\Sol_3$, given by:
\[
E_1= e^{-x_3} \der{x_1}, \quad E_2= e^{x_3} \der{x_2} \quad \text{and} \quad E_3= \der{x_3}.
\]

\subsection{Isometries and curl operator}

As in the $\E^3(\kappa, \tau)$, a natural volume field is $\Xi= x_3 E_3$.

\medskip

The isometry group of $\Sol_3$ is of dimension $3$, generated by the following three $1$-parameter families of translations:
\begin{gather*}
\Ss_1(t)(x_1, x_2, x_3)= (x_1+ t, x_2, x_3), \quad \Ss_2(t)(x_1, x_2, x_3)= (x_1, x_2+ t, x_3) \\
\text{and} \quad \Ss_3(t)(x_1, x_2, x_3)= (e^{-t} x_1, e^t x_2, x_3+ t).
\end{gather*}
The infinitesimal generators are respectively:
\[
S_1= \der{x_1}, \quad S_2= \der{x_2} \quad \text{and} \quad S_3= -x_1 \der{x_1}+ x_2 \der{x_2}+ \der{x_3}.
\]

\medskip

Consider $X \in \mathfrak{X}(\Sol_3)$ written $X= X^1E_1+ X^2E_2+ X^3E_3$ in the canonical frame. The curl of $X$ is:
\begin{multline*}
\rot X= \lp dX^3(E_2)- dX^2(E_3)+ X^2 \rp E_1+ \lp dX^1(E_3)- dX^3(E_1)+ X^1 \rp E_2 \\
+\lp dX^2(E_1)- dX^1(E_2) \rp E_3.
\end{multline*}
We deduce expressions of potential vectors:
\[
F_1= x_2 E_3, \quad F_2= -x_1 E_3 \quad \text{and} \quad F_3= -\frac{x_2 e^{-x_3}}{2} E_1+ \frac{x_1 e^{x_3}}{2} E_2- x_1 x_2 E_3.
\]

\subsection{Evolution under the action of isometries}

The expressions of the Noether forms are simpler in the case of $\Sol_3$ than in the $\E^3(\kappa, \tau)$. We have directly:
\begin{gather*}
\mu_1(\Ss_1(t))= \mu_1(\Ss_2(t))= \mu_1 \quad \text{and} \quad \mu_1(\Ss_3(t))= e^t \mu_1, \\
\mu_2(\Ss_1(t))= \mu_2(\Ss_2(t))= \mu_2 \quad \text{and} \quad \mu_2(\Ss_3(t))= e^{-t} \mu_2, \\
\mu_3(\Ss_1(t))= \mu_3- t\mu_1, \quad \mu_3(\Ss_2(t))= \mu_3+ t\mu_2 \quad \text{and} \quad \mu_3(\Ss_3(t))= \mu_3.
\end{gather*}

\begin{proposition}

Let $f: \Sigma \into \Sol_3$ be a minimal or CMC immersion. If $f$ is invariant under the action of a horizontal translation $\Ss_1(t)$ (resp. $\Ss_2(t)$), then $\mu_1= 0$ (resp. $\mu_2= 0$). And if $f$ is invariant for a vertical translation $\Ss_3(t)$, then $\mu_1= \mu_2= 0$.

\end{proposition}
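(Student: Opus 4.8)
The plan is to read the conclusion directly off the three evolution formulæ for the Noether forms under $\Ss_1$, $\Ss_2$, $\Ss_3$ stated just above the proposition, once the hypothesis of invariance has been turned into an equality of cohomology classes. First I would make the invariance precise: if $f$ is invariant under $\Ss_j(t)$, then $\Ss_j(t) \circ f$ has the same image as $f$, so the two differ by a reparametrization $\psi_t: \Sigma \into \Sigma$, i.e. $\Ss_j(t) \circ f= f \circ \psi_t$. The legendrian lift of $f \circ \psi_t$ is $N \circ \psi_t$, so the Noether form attached to it is $\psi_t^*(N^* \mu_i)$; by the very definition of $\mu_i(\Ss_j(t))$ as the form in restriction to $\Ss_j(t) \circ f$, this gives the identity $N^* \mu_i(\Ss_j(t))= \psi_t^*(N^* \mu_i)$ on $\Sigma$ for each $i$.

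The key observation is then cohomological. Since $f$ is a CMC-$H$ immersion, Noether's theorem guarantees that $N^* \mu_i$ is closed, so $[N^* \mu_i] \in H^1(\Sigma)$ is well defined, and the invariants are the periods $\sigma_i(c)= \int_c N^* \mu_i$ over $c \in H_1(\Sigma)$. Because each $\Ss_j$ is a connected $1$-parameter group, the family $t \mapsto \psi_t$ joins $\psi_0= \mathrm{id}_{\Sigma}$ to $\psi_t$, so $\psi_t$ is isotopic to the identity and $\psi_t^*$ acts trivially on $H^1(\Sigma)$. Hence, at the level of classes (equivalently, of the invariants $\sigma_i$), invariance of $f$ yields $\mu_i(\Ss_j(t))= \mu_i$ for every $i$ and every $t$.

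It remains to combine this with the displayed evolution formulæ. For $\Ss_1(t)$ the only formula carrying information is $\mu_3(\Ss_1(t))= \mu_3- t\mu_1$; equating it with $\mu_3$ gives $t\mu_1= 0$ in $H^1(\Sigma)$, whence $\mu_1= 0$ upon choosing any $t \neq 0$. The case of $\Ss_2(t)$ is symmetric, using $\mu_3(\Ss_2(t))= \mu_3+ t\mu_2$ to obtain $\mu_2= 0$. For the vertical translation $\Ss_3(t)$ I would instead use $\mu_1(\Ss_3(t))= e^t \mu_1$ and $\mu_2(\Ss_3(t))= e^{-t} \mu_2$; equating each with $\mu_1$, resp. $\mu_2$, gives $(e^t- 1)\mu_1= 0$ and $(e^{-t}- 1)\mu_2= 0$, so $\mu_1= \mu_2= 0$ for any $t \neq 0$, while the formula $\mu_3(\Ss_3(t))= \mu_3$ is vacuous and leaves $\mu_3$ free, exactly as the statement predicts.

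The one genuine point to watch is the cohomological invariance step: I must make sure that the reparametrization $\psi_t$ induced by the symmetry is isotopic to the identity, so that $\psi_t^*$ is trivial on $H^1(\Sigma)$. This holds automatically here because each $\Ss_j$ is a connected $1$-parameter group and $\psi_t$ is carried to $\psi_0= \mathrm{id}_{\Sigma}$ through the family; unlike the rotational case treated earlier in $\E^3(\kappa, \tau)$, no extra hypothesis relating a cycle $c$ to its image is required, which is precisely why the statement for $\Sol_3$ is unconditional.
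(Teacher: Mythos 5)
Your proof is correct and follows the same route the paper intends: the proposition is stated as an immediate consequence of the displayed evolution formulæ (using $\mu_3(\Ss_1(t))= \mu_3- t\mu_1$, $\mu_3(\Ss_2(t))= \mu_3+ t\mu_2$, and $\mu_i(\Ss_3(t))= e^{\pm t}\mu_i$), with invariance letting one equate $\mu_i(\Ss_j(t))$ with $\mu_i$ as classes. Your explicit justification of that equating step --- the reparametrization $\psi_t$ induced by the $1$-parameter group being isotopic to the identity, hence acting trivially on $H^1(\Sigma)$ --- is a detail the paper leaves implicit, and it correctly accounts for why no cycle hypothesis is needed here, in contrast with the rotational case in $\E^3(\kappa, \tau)$.
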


\section{Examples} \label{sec:exples}

\subsection{Vertical catenoids in $\Nil_3$}

In the Heisenberg group, using notations of P.~Bérard and M.~P.~Calvacante in~\cite{BeCa}, the vertical catenoids come as a $1$-parameter family $(\Cc_a)$ of rotationally invariant minimal, where $a$ is a positive parameter. A catenoid $\Cc_a$, for some $a> 0$, can be parametrized as:
\[
X_a: (t, \theta) \in \R \times [0, 2\pi] \mapsto \big{(} f(a, t)\cos \theta, f(a, t)\sin \theta, t \big{)} \in \Nil_3,
\]
where $f(a, \cdot)$ is a positive function which is a global solution of the Cauchy problem:
\[
f(f^2+ 4)f_{tt}= 4(1+ f_t^2), \quad f(0)= a \quad \text{and} \quad f_t(0)= 0.
\]
The parameter $a$ is indeed the \emph{size of the neck}. Consider for any fixed $t \in \R$, the closed curve $\Cc_a \cap \lac x_3= t \rac$, parametrized by:
\[
\theta \in [0, 2\pi] \mapsto \big{(} f(a, t)\cos \theta, f(a, t)\sin \theta, t \big{)} \in \Nil_3
\]
An orthonormal basis $(e_1, e_2)$ of the tangent space to $\Cc_a$ is:
\begin{gather*}
e_1= \frac{2}{\sqrt{4+ f^2}} \lp -\sin \theta E_1+ \cos \theta E_2- \frac{f}{2} E_3 \rp \\
\text{and} \quad e_2= -\frac{\sqrt{4+ f^2}}{\sqrt{4+ f_t^2(4+ f^2)}} \bigg{[} \lp f_t\cos \theta- \frac{2f}{4+ f^2} \sin \theta \rp E_1 \hspace{8em} \\
\hspace{15em}+\lp f_t\sin \theta+ \frac{2f}{4+ f^2} \cos \theta \rp E_2+ \frac{4}{4+ f^2} E_3 \bigg{]},
\end{gather*}
with $e_1$ tangent to the curve $\Cc_a \cap \lac x_3= t \rac$.

If $S$ is the infinitesimal generator of a $1$-parameter family of isometries, we have:
\[
\sigma_S= -\frac{f}{2} \sqrt{4+ f^2} \int_0^{2\pi} \lan S, e_2 \ran d\theta,
\]
and, along the curve, the infinitesimal generators $S_1, S_2, S_3, S_R$ are:
\begin{gather*}
S_1= E_1+ f\sin \theta E_3, \quad S_2= E_2- f\cos \theta E_3, \quad S_3= E_3 \\
\text{and} \quad S_R= f(-\sin \theta E_1+ \cos \theta E_2)- \frac{f^2}{2} E_3.
\end{gather*}
We obtain $\sigma_1= \sigma_2= \sigma_R= 0$ and:
\[
\sigma_3= 2\pi \frac{2f}{\sqrt{4+ f_t^2(4+ f^2)}}.
\]
This expression of $\sigma_3$ is given in~\cite[Proposition~2.2]{BeCa} up to the $2\pi$ factor. Moreover, as $\sigma_3$ is independent of $t$, we can make $t= 0$ in its expression to get $\sigma_3= 2\pi a$. Hence, it appears that the vertical flux controls the size of the neck.

\subsection{Horizontal catenoids in $\Nil_3$}

We follow the notations of B.~Daniel and L.~Hauswirth in~\cite{DaHa}. Consider the coordinates $(y_1, y_2, y_3)$ on $\Nil_3$ defined by:
\[
y_1= x_1, \quad y_2= x_2 \quad \text{and} \quad y_3= x_3+ \frac{x_1 x_2}{2}.
\]
The metric is $dy_1^2+ dy_2^2+(y_1 dy_2- dy_3)^2$ and the change of basis on the tangent space writes:
\[
\der{y_1}= \der{x_1}- \frac{x_2}{2} \der{x_3}, \quad \der{y_2}= \der{x_2}- \frac{x_1}{2} \der{x_3} \quad \text{and} \quad \der{y_3}= \der{x_3}.
\]

In these coordinates, the immersion $f_{\alpha}= (F_1, F_2, h): \C \into \Nil_3$ describing thee catenoid $\Cc_{\alpha}$ of parameter $\alpha> 0$ is:
\begin{gather*}
F_1(u, v)= \frac{G'(u)}{\alpha} \cos \phi(u) \sinh A(u, v)- \frac{C}{\alpha} \sin \phi(u) \cosh A(u, v), \\
F_2(u, v)= \frac{C}{\alpha} A(u, v)- \frac{C}{\alpha} \beta(u)- G(u)
\end{gather*} \vspace{-\belowdisplayskip} \vspace{-\abovedisplayskip} \begin{multline*}
\text{and} \quad h(u, v)= \frac{C}{\alpha} \lp \frac{G'(u)}{\alpha}- 1 \rp \cos \phi(u) \cosh A(u, v) \\
-\frac{1}{\alpha} \lp \frac{C^2}{\alpha}+ G'(u) \rp \sin \phi(u) \sinh A(u, v),
\end{multline*}
with $C, \phi, \beta, A, G$ defined as in~\cite{DaHa}, i.e. $C= \sin(2\theta)/ (2\alpha)$, $\phi$ is solution of the ODE:
\[
\phi'^2= \alpha^2+ \cos(2\theta) \cos^2 \phi- C^2 \cos^4 \phi,
\]
$\beta$ and $G$ are respectively defined by:
\[
\lac \begin{array}{l}
\beta'= C\cos^2 \phi \\
\beta(0)= 0
\end{array} \rr \quad \text{and} \quad \lac \begin{array}{l}
\dst G'= \frac{C^2 \cos^2 \phi- \cos(2\theta)}{\alpha- \phi'} \\
G(0)= 0
\end{array} \rr,
\]
$A= \alpha v+ \beta(u)$ and the parameter $\theta$ is chosen as solution of the equation:
\[
\int_{-1}^1 \frac{2\alpha C^2 t^2- \alpha \cos(2\theta)+ C^2 t^2 \sqrt{P(t)}}{\sqrt{(1- t^2)P(t)} \lp \alpha+ \sqrt{P(t)} \rp} dt= 0 \quad \text{with} \quad P(t)= \alpha^2+ \cos(2\theta) t^2- C^2 t^4.
\]

Consider the closed convex curve $\Cc_{\alpha} \cap \lac y_2= t \rac$ which is of period $2U$, naturally parametrized by:
\[
u \in [0, 2U] \mapsto \big{(} F_1(u, v), t, h(u, v) \big{)} \in \Nil_3,
\]
with the following expression of $A$ on the curve:
\[
A(u, v)= \frac{\alpha}{C} t+ \beta(u)+ \frac{\alpha}{C} G(u).
\]
An orthonormal basis $(e_1, e_2)$ of the tangent space to $\Cc_{\alpha}$ is:
\begin{gather*}
e_1= \cos \phi E_1- \sin \phi E_3 \\
\text{and} \quad e_2= \frac{1}{\cosh A} \big{(} -(\sin \phi \sinh A)E_1+ E_2- (\cos \phi \sinh A)E_3 \big{)},
\end{gather*}
with $e_1$ tangent to the curve $\Cc_{\alpha} \cap \lac y_2= t \rac$.

If $S$ is the infinitesimal generator of a $1$-parameter family of isometries, we have:
\[
\sigma_S= -\frac{1}{C} \int_0^{2U} (C^2+ G'^2) \lan S, \cosh A \, e_2 \ran du,
\]
and the infinitesimal generators $S_1, S_2, S_3, S_R$ write as follow along the curve:
\begin{gather*}
S_1= E_1+ tE_3, \quad S_2= E_2- F_1 E_3, \quad S_3= E_3 \\
\text{and} \quad S_R= -tE_1+ F_1 E_2- \frac{F_1^2+ t^2}{2} E_3.
\end{gather*}
We obtain:
\begin{gather*}
\sigma_1= \frac{1}{C} \int_0^{2U} (C^2+ G'^2)(\sin \phi+ t\cos \phi) \sinh A\ du, \\
\sigma_2= -\frac{1}{C} \int_0^{2U} (C^2+ G'^2)(1+ F_1\cos \phi~\sinh A) du, \\
\sigma_3= \frac{1}{C} \int_0^{2U} (C^2+ G'^2)\cos \phi~\sinh A\ du \\
\text{and} \quad \sigma_R= -\frac{1}{C} \int_0^{2U} (C^2+ G'^2) \lc t\sin \phi~\sinh A+ F_1+ \frac{F_1^2+ t^2}{2} \cos \phi~\sinh A \rc du.
\end{gather*}
These quantities are homological invariant and thus independent of the parameter $t$. It implies:
\[
\sigma_1= \sigma_3= \sigma_R= 0 \quad \text{and} \quad \sigma_2= \frac{1}{2\alpha C} \int_0^{2U} (C^2+ G'^2)(G' \cos^2 \phi- 2\alpha)du.
\]
Moreover, using relations between $G$ and $\phi$~\cite[page~14]{DaHa}, we get:
\[
\sigma_2= \frac{\cos(2\wt{\theta}_{\alpha})}{\alpha C} G(U)- 2CU.
\]

\subsection{CMC-$1/ 2$ vertical ends in $\H^2 \times \R$}

In a recent paper~\cite{CaHa}, the author and L.~Hauswirth have constructed entire graphs and annuli of constant mean curvature $1/ 2$ in $\H^2 \times \R$ with prescribed asymptotic behavior seen as deformations of rotational examples, and it appears that the existence conditions are flux conditions. The constructed surfaces have vertical ends, which means the ends are properly immersed topological annuli with asymptotically horizontal normal vector.

Following the notations in~\cite{CaHa}, the surfaces are parametrized in the Poincaré disk model of $\H^2 \times \R$ by:
\[
X_{\beta}^{\eta}: re^{i\theta} \in \Omega \mapsto \lp \frac{2r}{1+ r^2} e^{i\theta}, e^{\eta(r, \theta)} h_{\beta}(r) \rp \in \H^2 \times \R,
\]
where $\beta> 0$ is a real parameter, $(r, \theta)$ are the polar coordinates on $\R^2$, $\Omega$ is the subset of the unit circle $\D$ given by:
\[
\Omega= \lac w \in \D \big{|} R< |w|< 1 \rac \quad \text{with} \quad R> \lb \frac{\sqrt{\beta}- 1}{\sqrt{\beta}+ 1} \rb,
\]
$\eta$ is a $\Cc^{2, \alpha}$-function on $\Omega \cup \S^1$ for some $\alpha \in (0, 1)$ and $h_{\beta}$ is the function:
\[
h_{\beta}(r)= \int_{|\log \beta|}^{2\log \lp \frac{1+ r}{1- r} \rp} \frac{\cosh t- \beta}{\sqrt{2\beta \cosh t- 1- \beta^2}} dt.
\]
Note that when $\eta \equiv 0$, the $1$-parameter family $(X_{\beta}^0)$ indexed by $\beta$ is the family of CMC-$1/2$ rotational examples --~see~\cite{SETo} for details~-- and if $\beta= 1$, $X_1^{\eta}$ is the end of an entire graph. We also have a simpler expression of $h_1$:
\[
h_1(r)= 2\frac{1+ r^2}{1- r^2}.
\]

\medskip

We compute the vertical flux $\sigma_3$ on a circle $\lac r= t \rac$ with $R< t< 1$. The infinitesimal generator of the vertical translations and the associated potential vector are respectively:
\[
S_3= E_3 \quad \text{and} \quad F_3= \frac{2r}{1+ r^2} (-\sin \theta E_1+ \cos \theta E_2),
\]
and making $t \into 1$, we obtain:
\[
\sigma_3= 2\pi \lp 1- \beta |e^{-\gamma}|_{L^2(\S^1)}^2 \rp \quad \text{with} \quad \gamma= \eta|_{r= 1}.
\]

If $\beta= 1$, the ends $X_1^{\eta}$ are ends of entire graphs, which implies in particular $\sigma_3= 0$ i.e. $|e^{-\gamma}|_{L^2(\S^1)}^2= 1$. It is precisely the necessary and sufficient condition of~\cite[Theorem~3.8]{CaHa}. And in the case of annuli, $\beta \neq 1$, the condition on the values at infinity in the definition of a $\beta$-deformable annulus at the beginning of~\cite[Subsection~5.2]{CaHa} is also the conservation of the vertical flux.

It is worth mentioning that in general the Noether invariants express only necessary conditions on the existence of a surface and not sufficient conditions as in the present case.

\medskip

Focus now on rotational annuli, the ends of which are parametrized by the immersions $X_{\beta}^0$ with $\beta \neq 1$. Similarly as for the computation of $\sigma_3$, we have $\sigma_1= \sigma_2= \sigma_R= 0$ and we already know $\sigma_3= 2\pi(1- \beta)$.

The expressions of the behaviors of Noether invariants under the action of isometries computed in Subsection~\ref{subsec:evolisome} show that the values of the flux and torque do not change when translating a rotational annulus. This point differs from the situation of minimal catenoids in $\R^3$, see for instance~\cite{Ro}. Indeed, in the space form $\R^3$, the torque has two more components and these components carry the information on the axis of the ends. It is no longer the case in $\H^2 \times \R$, which underlies the existence result~\cite[Theorem~5.10]{CaHa} of annuli without axis.

\vfill

\nidt Sébastien \textsc{Cartier}, Université Paris-Est, LAMA (UMR 8050), UPEMLV, UPEC, CNRS, F-94010, Créteil, France \\
e-mail: \verb+sebastien.cartier@u-pec.fr+


\begin{thebibliography}{1}

\bibitem{BeCa}
P.~Bérard and M.~P. Calvacante, \emph{Stability properties of rotational
  catenoids in the {H}eisenberg groups}, preprint arXiv:1010.0774, 2010.

\bibitem{BrGriGro}
R.~Bryant, P.~Griffiths, and D.~Grossman, \emph{Exterior differential systems
  and {E}uler-{L}agrange partial differential equations}, Chicago Lectures in
  Mathematics, University of Chicago Press, 2003.

\bibitem{CaHa}
S.~Cartier and L.~Hauswirth, \emph{Deformations of {CMC}-$1/ 2$ surfaces in
  $\mathbb{H}^2 \times \mathbb{R}$ with vertical ends at infinity}, preprint
  arXiv:1203.0760, 2012.

\bibitem{DaHa}
B.~Daniel and L.~Hauswirth, \emph{Half-space theorem, embedded minimal annuli
  and minimal graphs in the {H}eisenberg group}, Proc. Lond. Math. Soc. (3)
  \textbf{98} (2009), no.~2, 445--470.

\bibitem{SETo}
R.~Sa Earp and E.~Toubiana, \emph{Screw motion surfaces in $\mathbb{H}^2 \times
  \mathbb{R}$ and $\mathbb{S}^2 \times \mathbb{R}$}, Illinois J. Math.
  \textbf{49} (2005), no.~4, 1323--1362 (electronic).

\bibitem{No}
E.~Noether, \emph{Invariant variation problems}, Transport Theory Statist.
  Phys. \textbf{1} (1971), no.~3, 186--207, Translated from the German (Nachr.
  Akad. Wiss. G{\"o}ttingen Math.-Phys. Kl. II 1918, 235--257).

\bibitem{Ol}
P.~J. Olver, \emph{Applications of {L}ie groups to differential equations},
  second ed., Graduate Texts in Mathematics, vol. 107, Springer-Verlag, 1993.

\bibitem{Ro2}
P.~Romon, \emph{Noether theorem, conserved quantities, minimal and {CMC}
  surfaces}, in Oberwolfach Mini-Workshop \textit{Progress in surface theory},
  Report no. 21/2010.

\bibitem{Ro}
\bysame, \emph{Symmetries and conserved quantities for minimal surfaces},
  unpublished preprint, 1997.

\end{thebibliography}
\end{document}